\documentclass{article}
\usepackage{graphicx} 
\usepackage[utf8]{inputenc}
\usepackage{amsmath}
\usepackage{amssymb}
\usepackage{amsthm}
\usepackage{amsfonts}
\usepackage{url}
\usepackage[all,knot,poly]{xy}
\usepackage{todonotes}
\theoremstyle{definition}
\newtheorem{definition}{Definition}

\newtheorem{proposition}{Proposition}
\newtheorem{lemma}{Lemma}
\newtheorem{remark}{Remark}

\usepackage{tikz-cd}

\newcommand\I{\overrightarrow{I}}
\newcommand\dipi[1]{\overrightarrow{\pi}_1(#1)}

\newcommand\dipin[1]{\overrightarrow{\pi}_n(#1)}

\newcommand\Real[0]{\mathbb{R}}
\newcommand\N[0]{\mathbb{N}}
\newcommand\look[1]{{\bf #1}}

\title{Directed homotopy modules}
\author{Eric Goubault}
\date{LIX, CNRS, École polytechnique, Institut Polytechnique de Paris, 91120 Palaiseau, France}

\begin{document}

\maketitle

\section{Introduction}

In this short note, we argue that directed homotopy can be given the structure of generalized modules, over particular monoids. This is part of a general attempt for refoundation of directed topology \cite{thebook,grandisbook}, which first step was given, for directed homology, in \cite{persmod}. 

For directed homology, the intuition in \cite{persmod} was that the Abelian homology groups of trace spaces between two points could be given the structure of a bimodule over a path algebra. In this note, we see the homology groups of trace spaces between two points as (generalized) bimodules over trace monoids. The bimodule structure will be natural by noting that trace monoids have the structure of absorption monoids, aka monoids in the category of pointed sets, thus very similar to rings, aka monoids in the category of Abelian groups. 

Among the nice consequences of this reformulation is that this paves the way towards putting directed homology and homotopy structures in the same general framework of framed bicategories in the sense of \cite{framed}: some of the properties we are proving in this note (in particular the restriction, extension and co-extension of scalars functor) go into  that direction. Still, the higher-categorical framework is left for a future article. 

\paragraph{Contents}

In Section \ref{sec:dspace} we recall the basic notions from directed homotopy theory, and in particular that of a directed space. Then in Section \ref{sec:absorb} we describe absorption monoids, and construct trace monoids of a directed space. These will play the role of (non-abelian) coefficient for directed homotopy modules. These non-abelian modules are introduced in Section \ref{sec:modules} and the definition of directed homotopy modules are given in Section \ref{sec:dirhommod}. As usual in homotopy theory, the 0th homotopy is not a module but a mere set: this is the case here as well, but for the first directed homotopy, which is a pointed set, but still with a (weak) module structure over the trace monoid, this is described in Section 
\ref{sec:pointedsets}. 
Finally, we show in Section \ref{sec:changes} that these dihomotopy modules behave like ordinary modules, with restriction and extension (and co-extension) of scalars functors, much in the spirit of what has been done in \cite{persmod} for directed homology. 


\section{Background on directed topology}

\label{sec:dspace}

The concept of a directed space was introduced in \cite{grandisbook}.    
Let $I=[0,1]$ denote the unit
segment with the topology inherited from $\Real$. 

\begin{definition}[\cite{grandisbook}]
A directed topological space, or a d-space is a pair  $(X,dX)$ consisting of a topological space $X$ equipped with
a subset $dX\subset X^I$ of continuous paths $p:I \rightarrow X$, called directed paths or
d-paths, satisfying three axioms: 
\begin{itemize}
\item every constant map $I\rightarrow X$ is directed;
\item $dX$ is closed under composition with continuous non-decreasing maps  $I\to I$;
\item $dX$ is closed under concatenation.
\end{itemize}
\end{definition}

We shall abbreviate the notation $(X,dX)$ to $X$. 

Note that for a d-space $X$, the space of d-paths $dX\subset X^I$ is a topological space, it is equipped with the compact-open topology. 
A continuous map $\varphi \ : I \rightarrow I$ is called a reparametrization if $\varphi(0) = 0$, $\varphi(1) = 1$ and if $\varphi$ is increasing, i.e. if $s\leq t \in I$ implies $\varphi(s) \leq \varphi(t)$.
Two directed paths $p, q \ : \I \rightarrow X$ are called reparametrization equivalent if
there exist reparametrizations $\varphi$, $\psi$ such that $p \circ \varphi = q \circ \psi$. The trace space $Tr(X)$ is the topological space of directed paths, quotiented by reparametrizations (see e.g. \cite{raussen2007reparametrizations}). For each $a$, $b$ in $X$, we call $Tr(X)(a,b)$ the sub-topological space of traces $[p]$ such that $p(0)=a$ and $p(1)=b$, i.e. of paths that begin at $a$ and end at $b$, modulo reparametrization. 

Note that there is a partially defined binary operation on directed paths of $X$, called the concatenation of directed paths, defined as follows:
$$p*q(t)=\left\{\begin{array}{ll}
p(2t) & 0\leq t \leq 1/2 \\
p(2t-1) & 1/2\leq t \leq 1
\end{array}\right.
$$
\noindent for $p$ a dipath in $X$ from $a$ to $b$ and $q$ a dipath in $X$ from $b$ to $c$, which carries over to traces, so that it becomes associative (with as neutral elements, all constant paths). 

A map $f: X\to Y$ between d-spaces $(X, dX)$ and $(Y, dY)$ is said to be {\it a d-map} if it is continuous and for any d-path $p\in dX$ 
the composition $f\circ p:I\to Y$ belongs to $dY$. In other words we require that $f$ preserves d-paths. We write $df~: dX \rightarrow dY$ for the induced map between directed paths
spaces. We denote by ${\cal D}$ the category of d-spaces. 

An isomorphism (also called dihomeomorphism) between directed spaces is a homeomorphism which is a directed map, and whose inverse is also a directed map. One of the ultimate goals of directed topology is to classify directed spaces up to dihomeomorphisms. 

Classical examples of d-spaces arise as geometric realizations of precubical sets, as found in e.g. the semantics of concurrent and distributed systems \cite{thebook}. We will use some of these classical examples to explain the directed homology theory we are designing in this article, hence we need to recap some of the basic definition on precubical sets, as well as their relation to d-spaces.

\section{Absorption monoids} 
\label{sec:absorb}


In this section, we define a particular monoidal structure, that has appeared in other contexts (see e.g. \cite{howie1995fundamentals}, with various names such as pointed, or absorption monoid, or monoid with zero), which will encode the quintessential aspects of traces, in directed spaces. 

\subsection{Basic definitions}

\begin{definition}
\label{def:absorptionmon}
Pointed (or absorption) monoids are monoids $(M,\times)$ with an absorbing element $0$, i.e., it obeys the following algebraic laws:
\begin{eqnarray}
m \times (m' \times m'') & = & (m\times m')\times m''\\
m \times 1 & = & m \\
1 \times m & = & m \\
m \times 0 & = & 0 \\
0 \times m & = & 0
\end{eqnarray}
\end{definition}

These come with suitable morphisms, to form the category of absorption monoids: 

\begin{definition}
A morphism $f$ of pointed monoids is a function $f: \ M \rightarrow M'$ such  that:
\begin{eqnarray}
f(m\times m') & = & f(m) \times f(m') \\
f(1) & = & 1 \\
f(0) & = & 0
\end{eqnarray}
\end{definition}

Some absorption monoids we will encounter have extra properties:

\begin{definition}
Pointed (or absorption) groups $G$ are absorption monoids $M$ such that $M\backslash \{0\}$ is a group. Pointed (or absorption) abelian groups $G$ are absorption monoids $M$ such that $M\backslash\{0\}$ is an abelian group. 
\end{definition}

We write $Mon_*$ for the category of absorption monoids, $Grp_*$ for the category of absorption groups and $Ab_*$ for the category of absorption abelian groups.  

\subsection{Categorical properties of absorption monoids}

\label{sec:catabsorp}
The category of absorption monoids is complete and co-complete, as is the category of monoids. This can easily be seen as follows for the latter case: the algebras of the free monoid monad on the category $Set$  of sets are monoids, and the Eilenberg-Moore category of the free monoid monad is the category of monoids. As well known, Eilenberg-Moore categories are complete when the base category (here $Set$) is complete. For the co-completeness, this follows from general theorems as well, which show in particular that any Eilenberg-Moore category over a monad on $Set$ (or any slice of it) is also co-complete. It is an easy exercise to carry this over to the free absorption monoid monad on the category of pointed sets $Set_*$ (left-adjoint to the forgetful functor from absorption monoids to $Set_*$ that points out the absorbing element within the set of elements of the monoid). 

To make things more concrete, we give some categorical constructions of interest below:

\paragraph{Products}

The product $(M,\times)$ of absorption monoids $(M_i,\times_i)$, $i\in I$ exists and has as underlying set $\mathop{\Pi}\limits_{i\in I} M_i$. The product of its elements $(m_i)_{i\in I}$ and $(n_j)_{j\in I}$ is given by:
$$
(m_i)_{i\in I}\times (n_j)_{j\in I} = (m_i\times_i n_i)_{i \in I}
$$
\noindent and $0_M=(0_{M_i})_{i\in I}$.

\paragraph{Coproducts}

The coproduct of $(M_i,\times_i)$, $i\in I$ exists and consists, set-theoreti\-cally, of the set of sequences $(m_i)_{i\in I}$ such that only a finite number of the $m_i$, $i\in I$ is not equal to the corresponding absorbing element $0_{M_i}$. The product of two such ``finite sequences'' is:
$$
(m_i)_{i\in I} \times (n_j)_{j\in J}=(m_i\times n_i)_{i\in I}
$$
\noindent which is indeed a ``finite sequence''. 

\paragraph{Quotients (for co-equalizers)}

The quotient of an absorption monoid $M$ by an absorption monoid $N\subseteq M$ is the absorption monoid $M/N$ which has as elements, classes $[m]$ of elements $m\in M$ under the (monoid) congruence generated by the relation
$n \sim 0$, for $n\in N$.


\subsection{Trace monoids of directed spaces}

Indeed, traces of a directed space give rise naturally to an absorption monoid. 

Let $X$ be a directed space. 

\begin{definition}
\label{def:monoidofpaths}
The \look{trace monoid} 
$T_X$ of $X$ is the absorption monoid whose elements are $t=[p] \in Tr(X)$ plus two elements that we denote by $0$ and $1$ with the following internal multiplication:
\begin{itemize}
\item If 
$[p]$, $[q]\in Tr(X)$, :
$$[p]\times [q]=
\left\{\begin{array}{ll}
[p*q]& \mbox{if $p$ and $q$ are composable} \\
0 & \mbox{otherwise} 
\end{array}
\right.$$
\item $[p]\times 0=0 \times [p]=0\times 0=0$
\item $[p]\times 1=1\times [p]=[p]$
\item $1\times 1=1$
\end{itemize}
\end{definition}

\section{Modules over absorption monoids}

\label{sec:modules}
\subsection{Basic definitions}

\begin{definition}
A (left) \look{module} $(G,T)$ in $Mon_*$ over absorption monoids is an absorption monoid $M$ together with an action of an absorption monoid $T$, i.e. a map
$\bullet: \ T\times M \rightarrow M$ such that, for all $t$, $t'$ in $T$ and $m$, $m'$ in $M$: 
\begin{itemize}
\item $(t\times t') \bullet m=t\bullet (t'\bullet m)$
\item $1 \bullet m = m$
\item $0 \bullet m = 0 = t \bullet 0$ 
\item $t\bullet (m \times m')=(t\bullet m) \times (t\bullet m')$
\end{itemize}
\end{definition}

We define similarly right modules and bimodules, that are both left and right modules.

\begin{definition}
The category of left modules in $Mon_*$ over $Mon_*$ has left modules $(M,T)$ as objects, and as morphisms, pairs $(f,h): \ (M,T) \rightarrow (M',T')$ where $f: \ M \rightarrow M'$ is an absorption monoid homomorphism, and $h: \ T \rightarrow T'$ is an absorption monoid homomorphism, such that: 
$$ f(t\bullet m)=h(t)\bullet f(m)
$$
We denote the category of left modules by $LMod$. 
\end{definition}

We have similar definition for the category $RMod$ of right modules and of bimodules $Mod$. 
The subcategories of left modules (resp. right modules, bimodules) over a specific absorption monoid $T$ is denoted by $LMod_T$ (resp. $RMod_T$ and $Mod_T$). 

\begin{remark}
Let $R$ be a ring. 
The \look{monoid algebra} $A[T]$ of an absorption monoid $T$ is the $R$-algebra whose elements $a$ are formal sums of elements of $T$, i.e. are of the form (for some $k \geq 0$, $t_i \in T$ and $r_i\in R$ for $i=1,\ldots,k$):
$$a=\sum\limits_{i=1}^k r_i t_i$$
\noindent with the following $R$-module structure:
\begin{itemize}
\item $\sum\limits_{i=1}^k r_i t_i + \sum\limits_{i=1}^k s_i t_i=\sum\limits_{i=1}^k (r_i+s_i) t_i$
\item $r.\sum\limits_{i=1}^k r_i t_i=\sum\limits_{i=1}^k (r.r_i) t_i$
\end{itemize}
and internal (algebra) multiplication: 
$$\left(\sum\limits_{i=1}^k r_i t_i\right)\times \left(\sum\limits_{j=1}^k s_j t_j\right)=\sum\limits_{i,j=1}^k (r_i.t_j) (t_i\times t_j)$$

Now, from a module $M$ in $Mon_*$ over absorption monoids, we can define an action of the monoid algebra $A[M]$ on the $R$-module generated by the abelianization $Ab(M)$. This will allow for making the link between standard modules and modules over absorption monoids, and get Hurewitz like theorems. 
\end{remark}

\subsection{Categorical properties of modules over absorption mo\-noids}

We note that 
the category of modules in $Mon_*$ over $Mon_*$ is a particular case of a module over the  monoidal category of absorption monoids, with the extra requirement that $0\bullet m=0=t\bullet 0$. As with classical categories of modules over categories with minimal categorical properties (such as completeness and co-completeness), the category $LMod$ is complete and co-complete. We give below only some of the basic categorical constructs. 

\paragraph{Products}

The product of $(M_i,T_i)$, $i\in I$ exists and is $(\prod\limits_{i\in I} M_i,\prod\limits_{i\in I} T_i)$ (where each product is taken in the category of absorption monoids, see Section \ref{sec:catabsorp}) with componentwise action of sequences $(t_i)_{i\in I} \in \prod\limits_{i\in I} T_i$ over sequences $(m_i)_{i\in I}\in \prod\limits_{i\in I} M_i$.

\paragraph{Co-products}

The coproduct of $(M_i,T_i)$ (with action $\bullet_i$ of $T_i$ on $M_i$), $i\in I$ exists and is composed of the coproduct of the $(M_i)_{i\in I}$ (as an absorption monoid, see Section \ref{sec:catabsorp}) with an action of the coproduct (as an absorption monoid) of the $(T_i)_{i\in I}$ with componentwise action of elements:
$$
(t_i)_{i\in I} \bullet (m_j)_{j\in I} = (t_i\bullet_i m_i)_{i\in I}
$$
\noindent (all sequences $(t_i)_{i\in I}$ and $(m_j)_{j\in I}$ being ``finite", the sequence $(t_i\bullet_i m_i)_{i\in I}$ is also a ``finite" sequence).

\paragraph{Quotients}

The quotient of a module $(M,T)$ by the sub-module $(N,T)$ over absorption monoids is the absorption monoid $M/N$ (see Section \ref{sec:catabsorp}) with the action of $T$ induced in this quotient by the action of $T$ on $M$. Indeed, as $T\bullet N \subseteq N$ ($(N,T)$ is a module), this carries over nicely to the corresponding equivalence classes. 



\section{Pointed sets and categories of modules over pointed sets}

\label{sec:pointedsets}

\begin{definition}
A pointed set is a set that has a distinguished element $*$ (therefore, cannot be empty). Morphisms of pointed sets are functions that preserve the distinguished element $*$. We denote by $Set_*$ the category of pointed sets.     
\end{definition}

\begin{definition}
A (left) \look{module} $(S,T)$ in $Set_*$ over absorption monoids is a pointed set $S$ together with an action of an  absorption monoid $T$, i.e. a map
$\bullet: \ T\times S \rightarrow S$ such that, for all $t$, $t'$ in $M$ and $g$, $g'$ in $S$: 
\begin{itemize}
\item $(t\times t') \bullet s=t\bullet (t'\bullet s)$
\item $1 \bullet s = s$
\item $0 \bullet s = * = t\bullet *$ 
\end{itemize}
\end{definition}

\begin{definition}
The category of left modules in $Set_*$ over $Mon_*$ has left modules $(S,T)$ as objects, and as morphisms, pairs $(f,h): \ (S,T) \rightarrow (S',T')$ where $f: \ S \rightarrow S'$ is a pointed set homomorphism, and $h: \ T \rightarrow T'$ is an absorption monoid homomorphism, such that: 
$$ f(t\bullet m)=h(t)\bullet f(m)
$$

The category of (resp. left, right, bi-) modules in $Set_*$ over absorption monoids is denoted by $LModSet$ (resp. $RModSet$, $ModSet$). 
\end{definition}

\begin{remark}
Left modules $M$ in $Set_*$ over absorption monoids $T$ can be seen as an encoding of a transition system, at least when the absorption monoid $T$ is a free absorption monoid over an alphabet $\Sigma$. In the latter case, the absorption monoid is the language monoid of the transition system with states $S=M\backslash \{*\}$, given by the transition relation $Trans \subseteq S \times \Sigma \times S$ such that $(s,\sigma,s')\in Trans$ if and only if $\sigma \bullet s=s'$ (hence is different from $*$). The action $\bullet$ extends the transition function so as to define the action of strings on states of the transition system. 
\end{remark}



    

\section{Directed Homotopy Modules}

\label{sec:dirhommod}
We now have all necessary structures to define directed homotopy modules. 
First, we need the following objects. 

Let $\I$ be the directed interval as in Section \ref{sec:dspace}, and $I$ be the directed space $[0,1]$ where all paths are directed. 
We recall the following: 
the standard simplex of dimension $n$ is $$
\Delta_n=\left\{(t_0,\ldots,t_n) \mid \forall i\in \{0,\ldots,n\}, \ t_i \geq 0 \mbox{ and } \sum\limits_{j=0}^n t_j=1\right\}
$$

Now we define traces of all dimensions in a directed space:

\begin{definition}
Let $X$ be a directed space, $i\geq 1$ an integer. 
We call $p$ an $i$-trace, or trace of dimension $i$ of $X$, any 
 continuous map 
$$p: \Delta_{i-1} \rightarrow Tr(X)$$ 
\noindent which is such that: 
\begin{itemize}
\item $p(t_0,\ldots,t_{i-1})(0)$ does not depend on $t_0,\ldots,t_{i-1}$ and we denote it by $s_p$ (``start of $p$")
\item and $p(t_0,\ldots,t_{i-1})(1)$ is constant as well, that we write as $t_p$ (``target of $p$")
\end{itemize}
We write $T_i(X)$ for the set of $i$-traces in $X$. We write $T_i(X)(a,b)$, $a$, $b \in X$, for the subset of $T_i(X)$ made of $i$-traces from $a$ to $b$. 
\end{definition}

Now, we observe that the $i$-traces of a directed space naturally form an (absorption) monoid: 

\begin{lemma}
\label{lem:Timod}
Let $X$ be a directed space and $i\geq 1$. The pointed set $T_i(X)\cup \{*\}$ is a module (both on the left and on the right), hence a bimodule, over the absorption monoid $T_X$. 
\end{lemma}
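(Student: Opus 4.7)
The plan is to endow $T_i(X) \cup \{\ast\}$ with a left action and a symmetric right action of $T_X$ defined by pointwise concatenation, taking the value $\ast$ whenever the relevant paths are not composable, and then to verify the module and bimodule axioms by a short case analysis.

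Concretely, for the left action, given $[q] \in Tr(X) \subseteq T_X$ and $p \in T_i(X)$ with start $s_p$ and target $t_p$, I would set
$$
[q] \bullet p \;=\; \begin{cases} \bigl((t_0,\ldots,t_{i-1}) \mapsto [q] \ast p(t_0,\ldots,t_{i-1})\bigr) & \text{if } t_{[q]} = s_p, \\ \ast & \text{otherwise,} \end{cases}
$$
together with $1 \bullet p = p$, $0 \bullet p = \ast$, and $[q] \bullet \ast = \ast$. The right action is defined symmetrically by post-concatenation. Two things have to be checked at this point: (i) when non-trivial, $[q] \bullet p$ is a genuine $i$-trace, with start $s_{[q]}$ and target $t_p$ constant on $\Delta_{i-1}$, which is immediate from the constancy of $s_p$ and $t_p$; and (ii) the map $\Delta_{i-1} \to Tr(X)$ thus constructed is continuous. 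Point (ii) reduces to continuity of the pre-composition operator $[q] \ast (-) : Tr(X)(s_p, t_p) \to Tr(X)(s_{[q]}, t_p)$, which is a standard property of trace spaces with the compact-open topology, as used in \cite{raussen2007reparametrizations}.

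I would then verify the left-module axioms case by case. Associativity $(t \times t') \bullet p = t \bullet (t' \bullet p)$ in the generic case $t, t' \in Tr(X)$ with all concatenations composable follows from associativity of trace concatenation applied pointwise on $\Delta_{i-1}$; all degenerate cases (non-composable pairs, or $t$ or $t'$ equal to $0$ or $1$) collapse to $\ast$ on both sides after tracking how the start and target data propagate through the partial concatenation in $T_X$. The unit and absorption axioms are immediate from the definition, and the right-module axioms are entirely symmetric. The bimodule compatibility $(t \bullet p) \bullet t' = t \bullet (p \bullet t')$ reduces once again, in the generic case, to pointwise associativity of trace concatenation, and to the equality $\ast = \ast$ in every degenerate case.

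The main obstacle is really just checking continuity of the pointwise concatenation map $\Delta_{i-1} \to Tr(X)$; everything else is routine bookkeeping on start and target data. Once continuity is invoked from standard facts about trace spaces, all the module and bimodule axioms fall out of the associativity and unit properties of concatenation at the level of individual traces.
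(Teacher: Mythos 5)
Your proposal is correct and follows essentially the same route as the paper: the paper defines the action $t \bullet p$ by exactly this pointwise concatenation over $\Delta_{i-1}$ (written out via an explicit reparametrization formula rather than with the trace-level operation $\ast$), sends non-composable pairs to the absorbing element, and treats the right action symmetrically. If anything, your version is more careful than the paper's, which declares the action axioms and the continuity of $u \mapsto t \ast p(u)$ to be obvious rather than reducing the latter, as you do, to continuity of pre-concatenation on trace spaces.
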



\begin{proof}
Let $t$ be a trace in $X$, a directed space, and $p: \ \Delta_{i-1} \rightarrow Tr(X)$ in $T_i(X)$. We define the action $t \bullet p$ of $t$ on $p$ to be the map from $\Delta_{i-1}\rightarrow Tr(X)$ with:
$$
t \bullet p = \left\{\begin{array}{llll}
u \in \Delta_{i-1} & \rightarrow & x \in \I \rightarrow \left\{\begin{array}{ll}
t(2x) & 0 \leq x \leq 1/2 \\
p(2x-1,u) & 1/2 \leq x \leq 1
\end{array}\right. & \mbox{if $t(1)=p(0)$} \\
0 & & & \mbox{otherwise}
\end{array}\right.
$$
This obviously defines an action from $T_X$ the absorption monoid of $X$, on $T_i(X)\cup \{*\}$, the pointed set of $i$-traces (augmented with the distinguished element $*$), by adding the requirement that $0\bullet p=*$. 

The action on the right $p\bullet t$ is similar, with a concatenation of path $t$ after the $i$-trace $p$. 
\end{proof}




Now, we recall that standard simplexes come with natural operators. 
For $n \in \N$, $n
\geq 1$ and $0 \leq k \leq n$, the $k$th ($n-1$)-face (inclusion) of the topological $n$-simplex is the subspace inclusion
$$\delta_k: \ \Delta_{n-1} \rightarrow \Delta_n$$
induced by the inclusion
$$(t_0,\ldots,t_{n-1}) \rightarrow (t_0,\ldots,t_{k-1},0,t_k,
\ldots, t_{n-1})$$
For $n \in \N$ and $0\leq k < n$, the 
$k$th degenerate 
$n$-simplex is the surjective map
$$
\sigma_k: \ \Delta_n \rightarrow \Delta_{n-1}$$
\noindent induced by the surjection: 
$$(t_0,\ldots,t_n)\rightarrow (t_0,\ldots,t_{k}+t_{k+1},\ldots,t_n)$$

Now, we are going to see that 
an $i$-trace of $X$ is a particular $(i-1)$-simplex of the trace space of $X$, for which we can define boundary and degeneracy maps, as usual: 

\begin{definition}
\label{def:boundaries}
Let $X$ be a directed space. We define:
\begin{itemize}
\item Maps $d_{j}$, $j=0,\ldots,i$ acting on $(i+1)$-traces 
$p: \ \Delta_i \rightarrow Tr(X)$, $i\geq 1$: 
$$d_j(p)=p\circ \delta_j$$
\item Maps $s_k$, $k=0,\ldots,i-1$ acting on $i$-traces 
$p: \Delta_{i-1} \rightarrow Tr(X)$, $i\geq 1$: 
$$s_k(p)=p\circ \sigma_k$$
\end{itemize}
\end{definition}

\begin{lemma}
\label{lem:boundariesstartend}
Maps $d_j$ defined in Definition \ref{def:boundaries} induce maps from $T_{i+1}(X)$ to $T_i(X)$. Similarly, maps $s_j$ induce maps from $T_i(X)$ to $T_{i+1}(X)$. 
These maps 
restrict to maps from $T_{i+1}(X)(a,b)$ to $T_i(X)(a,b)$ for any $a$, $b \in X$ (respectively, from $T_i(X)(a,b)$ to $T_{i+1}(X)(a,b)$). 
\end{lemma}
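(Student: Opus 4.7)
The proof is essentially a direct verification, so the plan is to unpack the definitions and check that the conditions required for being an $i$-trace (respectively an $(i+1)$-trace) are inherited under precomposition with $\delta_j$ and $\sigma_k$.

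First, I would check continuity. Since $p$ is continuous, $\delta_j: \Delta_{i-1} \to \Delta_i$ and $\sigma_k: \Delta_i \to \Delta_{i-1}$ are continuous, the compositions $d_j(p) = p \circ \delta_j$ and $s_k(p) = p \circ \sigma_k$ are continuous maps into $Tr(X)$, as required.

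Next, I would verify the ``constant start and constant end'' conditions. For $p \in T_{i+1}(X)$, by definition the point $p(v_0,\ldots,v_i)(0)$ does not depend on $(v_0,\ldots,v_i) \in \Delta_i$; call it $s_p$. Then for any $(u_0,\ldots,u_{i-1}) \in \Delta_{i-1}$,
$$d_j(p)(u_0,\ldots,u_{i-1})(0) = p(\delta_j(u_0,\ldots,u_{i-1}))(0) = s_p,$$
which is independent of $(u_0,\ldots,u_{i-1})$. The same argument applied at the endpoint $1$ shows $d_j(p)(u_0,\ldots,u_{i-1})(1) = t_p$. Hence $d_j(p) \in T_i(X)$ and moreover $s_{d_j(p)} = s_p$ and $t_{d_j(p)} = t_p$. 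The degeneracy case is symmetric: for $p \in T_i(X)$ and $(v_0,\ldots,v_i) \in \Delta_i$,
$$s_k(p)(v_0,\ldots,v_i)(0) = p(\sigma_k(v_0,\ldots,v_i))(0) = s_p,$$
and similarly at $1$, so $s_k(p) \in T_{i+1}(X)$ with the same start and end points as $p$.

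Finally, the restriction to the $(a,b)$-subspaces is then immediate: an element of $T_{i+1}(X)(a,b)$ is by definition one with $s_p = a$ and $t_p = b$, and we just showed that $d_j$ (respectively $s_k$) preserves both $s_p$ and $t_p$, so the image lies in $T_i(X)(a,b)$ (respectively $T_{i+1}(X)(a,b)$). I do not expect any real obstacle here; the only mildly delicate point is being careful to distinguish the two kinds of ``$0$'' and ``$1$'' at play, namely the vertices/endpoints of the simplex coordinates versus the endpoints $0,1 \in \I$ of the underlying traces in $Tr(X)$, but once the notation is kept straight the verification is purely formal.
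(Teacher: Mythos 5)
Your proof is correct and follows essentially the same route as the paper's: precompose with $\delta_j$ (resp.\ $\sigma_k$) and observe that the start and end points are unchanged, so membership in $T_i(X)$ and in the $(a,b)$-subsets is preserved. In fact your write-up is slightly more careful than the paper's, which omits the continuity remark and only spells out the computation $d_j(p)(\cdot)(0)=p(\delta_j(\cdot))(0)=s_p$, leaving the other cases as ``similar.''
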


\begin{proof}
We have to check is that the image of the boundary operators (resp. degeneracy operators) give $i$-traces from $(i+1)$-traces (resp. $(i+1)$-traces from $i$-traces). It is due to the fact that, indeed, 
%
if $p$ is a $(i+1)$-trace from $s_p$ to $t_p$, $d_j(p)(s_0,\ldots,s_{i-1})(0)=p(d_j(s_0,\ldots,s_{i-1}))(0)=s_p$, the other computations being similar. This also proves that boundary (resp. degeneracy) operators restrict to maps from $T_{i+1}(X)(s_p,t_p)$ to $T_i(X)(s_p,t_p)$ (resp. from $T_i(X)(s_p,t_p)$ to $T_{i+1}(s_p,t_p)$). 
\end{proof}

\begin{lemma}
\label{lem:singsimpset}
Let $X$ be a directed space. The boundary and degeneracy operators of Definition \ref{def:boundaries} give the sequence $ST(X)=(T_{i+1}(X))_{i\geq 0}$ the structure of a simplicial set, that we denote by $(ST(X),d_*,s_*)$ when we need to make the simplicial structure explicit. 

This restricts to a simplicial set $ST(X)(a,b)$ of traces from $a$ to $b$ in $X$, which is the singular simplicial set of the trace space from $a$ to $b$, with the compact-open topology. 
\end{lemma}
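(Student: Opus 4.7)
The plan is to reduce both claims of the lemma to standard facts, so that almost no new work is needed once Lemma~\ref{lem:boundariesstartend} is in hand. The simplicial-set structure requires verifying the simplicial identities
\[
d_i d_j = d_{j-1} d_i \ (i<j), \quad s_i s_j = s_{j+1} s_i \ (i\leq j),
\]
together with the three cases of the mixed identity $d_i s_j$. Since by definition $d_j(p) = p\circ \delta_j$ and $s_k(p) = p\circ \sigma_k$, each of these identities on $ST(X)$ is just the contravariant image under $(-\circ p)$ of the dual identity among the topological face and degeneracy maps $\delta_j$ and $\sigma_k$. The latter are the very identities that make the cosimplicial object $(\Delta_n)_{n\ge 0}$ in $\mathbf{Top}$ a cosimplicial space, and they hold by a direct computation on coordinates $(t_0,\ldots,t_n)$. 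So the first part reduces to composing with these known equalities; I would simply state this reduction and indicate one representative case (say $d_i s_j = s_{j-1} d_i$ for $i<j$) to make the mechanism transparent.

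Next, I would dispatch the well-definedness issue already handled in Lemma~\ref{lem:boundariesstartend}: the boundary and degeneracy images are again $i$-traces (resp.\ $(i+2)$-traces), and they preserve the start and endpoint of the trace. Combined with the preceding paragraph, this establishes that $(T_{i+1}(X))_{i\ge 0}$ with the $d_*$, $s_*$ is a simplicial set.

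For the second part, I would identify $T_{i+1}(X)(a,b)$ with the $i$-simplices of the singular simplicial set of $Tr(X)(a,b)$. By definition, an $(i+1)$-trace $p \colon \Delta_i \to Tr(X)$ lies in $T_{i+1}(X)(a,b)$ iff $p(u)(0)=a$ and $p(u)(1)=b$ for every $u\in \Delta_i$, i.e.\ iff $p$ factors through the subspace $Tr(X)(a,b)\subset Tr(X)$. Since $Tr(X)(a,b)$ carries the subspace topology induced by the compact-open topology on $Tr(X)$, continuity of $p$ as a map into $Tr(X)$ is equivalent to continuity as a map into $Tr(X)(a,b)$. Hence $T_{i+1}(X)(a,b) = \mathrm{Sing}_i(Tr(X)(a,b))$ as sets, and the face/degeneracy operators $d_j,s_k$ defined by precomposition with $\delta_j,\sigma_k$ are exactly the standard singular face and degeneracy maps.

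The only genuinely delicate point, which I would flag, is the last identification: one must check that the subspace topology on $Tr(X)(a,b)$ inside the compact-open topology on $Tr(X)$ agrees with the compact-open topology on the space of d-paths from $a$ to $b$, so that "continuous into $Tr(X)$ with fixed endpoints" and "continuous into $Tr(X)(a,b)$" coincide. This is a standard fact about the compact-open topology on subspaces of mapping spaces, but it is the one step I would cite explicitly rather than handwave. Everything else is formal bookkeeping following from the corresponding cosimplicial identities on topological simplices.
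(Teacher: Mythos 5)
Your proposal is correct and follows essentially the same route as the paper's proof: the simplicial identities are inherited from the cosimplicial identities satisfied by $\delta_j$ and $\sigma_k$, the restriction to $ST(X)(a,b)$ is delegated to Lemma~\ref{lem:boundariesstartend}, and the identification with the singular simplicial set of $Tr(X)(a,b)$ is the factoring-through-the-subspace observation that the paper dismisses as obvious. The only difference is one of detail, not of method: you make explicit (including the subspace-topology point, which is harmless here since $Tr(X)(a,b)$ is by definition a subspace of $Tr(X)$) what the paper's three-sentence proof leaves implicit.
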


\begin{proof}
The simplicial relations are direct consequences of the simplicial relations on $\delta_j$ and $\sigma_k$. The fact that $ST(X)(a,b)$ inherits the simplicial structure comes from Lemma \ref{lem:boundariesstartend}. The fact that this simplicial set is the singular simplicial set of the trace space from $a$ to $b$ is obvious. 
\end{proof}



In fact, the simplicial structure of $ST(X)$ marries nicely with the module structure seen in Lemma \ref{lem:Timod}:

\begin{proposition} 
\label{prop:Kan}
Let $X$ be directed space. The $(ST_*(X),d,s)$ can be given the structure of a Kan simplicial object in the category of modules of pointed sets, where $ST_*(X)$ denotes the graded (pointed) set $(T_i(X)\cup \{*\})_{i\in \N}$. 
\end{proposition}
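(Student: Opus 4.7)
The plan is to split the proof into two tasks: first, promote the simplicial set structure already established in Lemma~\ref{lem:singsimpset} to a simplicial object in $ModSet$ (over the monoid $T_X$); second, verify the Kan horn-filling condition.

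For the first task, I would verify that each face map $d_j : T_{i+1}(X) \cup \{*\} \to T_i(X) \cup \{*\}$ and each degeneracy $s_k : T_i(X) \cup \{*\} \to T_{i+1}(X) \cup \{*\}$ is a morphism in $ModSet_{T_X}$, i.e.\ commutes with both the left and right $T_X$-actions of Lemma~\ref{lem:Timod}. The computation is essentially immediate from the definitions: the simplicial operator acts on the simplex coordinate $u \in \Delta_{i-1}$ via $\delta_j$ or $\sigma_k$, whereas the action of $t \in T_X$ acts on the path coordinate $x \in I$ via concatenation at $x = 1/2$; these two act on independent coordinates, so
$$d_j(t \bullet p)(u') = (t \bullet p)(\delta_j(u')) = t \bullet (p \circ \delta_j)(u') = (t \bullet d_j(p))(u'),$$
and likewise for $s_k$ and for the right action. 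One must also observe that the ``otherwise'' branch is preserved: Lemma~\ref{lem:boundariesstartend} guarantees that $d_j(p)$ and $s_k(p)$ have the same source and target as $p$, so $t$ fails to compose with $d_j(p)$ precisely when it fails to compose with $p$, and the value $*$ is sent to $*$ on both sides. Combined with Lemma~\ref{lem:singsimpset}, this shows $(ST_*(X), d_*, s_*)$ is a simplicial object in $ModSet_{T_X}$.

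For the second task, I would reduce the Kan condition on $ST_*(X)$ to the classical fact that the total singular complex of any topological space is a Kan complex. A horn $\Lambda^n_k \to ST_*(X)$ is a family of $(n-1)$-traces indexed by the non-$k$ faces of $\Delta^n$ that match on shared subfaces. Since $d_j$ preserves both endpoints (Lemma~\ref{lem:boundariesstartend}) and sends $*$ to $*$, either every face of the horn is $*$ (in which case the constant-$*$ simplex fills it), or every non-$*$ face must share a common source $a$ and target $b$. In the non-trivial case the horn factors through $ST(X)(a,b)$, which by Lemma~\ref{lem:singsimpset} is the singular simplicial set of the topological space $Tr(X)(a,b)$ equipped with the compact-open topology; such singular complexes are Kan, so a filler exists in $ST(X)(a,b) \subseteq ST_*(X)$.

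The only genuine subtlety, which I would address briefly, is fixing exactly what ``Kan simplicial object in $ModSet$'' means here: I read it as a simplicial object in $ModSet_{T_X}$ whose underlying pointed simplicial set is Kan, and this is what the two steps above establish. A stronger reading demanding that horn fillers themselves be chosen functorially compatibly with the $T_X$-action would require more, but since the filler in $ST(X)(a,b)$ is an element rather than a morphism, no additional action-compatibility needs to be verified. The main obstacle is really just the bookkeeping of the module-morphism axioms across the $*$/non-$*$ dichotomy; the Kan step itself is essentially a citation of the standard result on singular complexes.
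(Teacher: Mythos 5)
Your proposal is correct and follows essentially the same two-step route as the paper's proof: first checking that the face and degeneracy maps commute with the $T_X$-action (the paper performs the same explicit computation for $d_j$, though it only writes out the composable case, whereas you also handle the non-composable and $*$ branches), and then reducing the Kan condition to Kan-ness of singular simplicial sets. Your handling of the Kan step is in fact slightly more careful than the paper's, which loosely invokes ``$ST(X)$ is the singular simplicial set of $Tr(X)$''; your observation that a horn either consists entirely of $*$ or factors through a single $ST(X)(a,b)$ (the singular complex of $Tr(X)(a,b)$) is the precise version of that claim.
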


\begin{proof}
Extending the boundary and degeneracy operators so that $d_k(*)=*$ and $s_j(*)=*$ makes $ST_*(X)$ makes the structure a simplicial (pointed) set, because of Lemma \ref{lem:singsimpset}. 

The fact that this defines a simplicial object in the category of modules of pointed sets is the consequence of the fact that, for all $t\in T_X$, the absorption monoid of $X$, $d_j$ and $s_k$ are morphisms of modules of pointed sets over $T_X$. 

Indeed, we compute for all $t \in T_X$ and $p \in T_{i+1}(X)$ such that $p(0,x_1,\ldots,x_i)=t(1)$:

$$\begin{array}{rcl}
\scriptstyle d_j(t\bullet p)(x_1,\ldots,x_i) & \scriptstyle = & \scriptstyle (t\bullet p) \circ \delta_j (x_1,\ldots,x_i)\\
& \scriptstyle = & \scriptstyle (t\bullet p) \circ \delta_j (x_1,\ldots,x_{j-1},0,x_j,\ldots,x_i)\\
& \scriptstyle = & \scriptstyle x \in \I \rightarrow \left\{\begin{array}{ll}
\scriptstyle t(2x) & \scriptstyle 0 \leq x \leq 1/2 \\
\scriptstyle p(2x-1,x_1,\ldots,x_{j-1},0,x_j,\ldots,x_i) & \scriptstyle 1/2 \leq x \leq 1
\end{array}\right.
\end{array}
$$
\noindent whereas:
$$
\begin{array}{rcl}
\scriptstyle (t\bullet d_j(p))(x_1,\ldots,x_i) & \scriptstyle = & \scriptstyle x \in \I \rightarrow \left\{\begin{array}{ll}
\scriptstyle t(2x) & \scriptstyle 0 \leq x \leq 1/2 \\
\scriptstyle d_j(p)(2x-1,x_1,\ldots,x_i) & \scriptstyle 1/2 \leq x \leq 1
\end{array}\right. \\
& = & \scriptstyle x \in \I \rightarrow \left\{\begin{array}{ll}
\scriptstyle t(2x) & \scriptstyle 0 \leq x \leq 1/2 \\
\scriptstyle p(2x-1,x_1,\ldots,x_{j-1},0,x_j,\ldots,x_i) & \scriptstyle 1/2 \leq x \leq 1
\end{array}\right.\\
& \scriptstyle = & \scriptstyle d_j(t\bullet p)(x_1,\ldots,x_i)
\end{array}
$$
\noindent and similarly for the degeneracy operators $s_j$. 



Now, this is a Kan simplicial module since as a simplicial set, $ST(X)$ is the singular simplicial set of a topological space ($Tr(X)$), which is Kan. Furthermore, it is easily seen that the Kan filler can be taken to be a module morphism.

\end{proof}




Once we have a Kan simplicial set in the category of modules of pointed sets over absorption monoids, 
we have a natural definition of a homotopy module as we recall below:

Let $t$ be a trace in $X$, then $\dipin{X,t}$ is the module whose underlying (pointed) set is made of equivalence classes modulo (classical) homotopy of traces $p \in T_n(X)$ (together with the distinguished element $*$) such that: 
\[
\begin{tikzcd}
\partial \Delta_{n-1} \arrow{r}{\epsilon} \arrow[swap]{d}{i} & \Delta_0 \arrow{d}{t} \\
\Delta_{n-1} \arrow{r}{p} & Tr(X)
\end{tikzcd}
\]
\noindent is a commutative diagram, where $i$ is the inclusion map of the boundary $\partial \Delta_{n-1}$ of $\Delta_{n-1}$ into $\Delta_{n-1}$ and $\epsilon$ is the unique map from $\partial \Delta_{n-1}$ to the point $\Delta_0$.

Now define $\dipin{X}$ to consist, set-theoretically, of the disjoint union of all $\dipin{X,t}$ over $t\in T_X$, quotiented 
by the identification of all distinguished points $*=t$ in $\dipin{X,t}$. 


For $n=1$, $\dipi{X}$ is a module of pointed sets over the absorption monoid of traces $T_X$ as, indeed (see Lemma \ref{lem:Timod}), for $t \in T_X$ and $p\in \dipin{X,t'}$, $t\bullet p$ is either $*$ or belongs to $\dipin{X,t\times t'}$:

\begin{lemma}
\label{lem:hommod1}
Let $X$ be a directed space. The fundamental dihomotopy module (or first homotopy module), is the module $(\dipi{X}_n,T_x)$ in $Set_*$ which has as elements $0$ and any $g\in \bigcup\limits_{a,b \in X} \pi_{0}(Tr(X)(a,b))$. 
\end{lemma}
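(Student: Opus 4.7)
The plan is to inherit the module structure from the simplicial module structure established in Proposition \ref{prop:Kan}, and observe that the first directed homotopy is (up to the base point) precisely $\pi_0$ of each component $Tr(X)(a,b)$ of the trace space, so the module action descends from the already-verified action on $T_1(X)\cup\{*\}$.

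First I would set up the underlying pointed set: $\dipi{X}$ consists of the common base point $*$ together with the disjoint union over $a,b\in X$ of $\pi_0(Tr(X)(a,b))$, which is the quotient of $T_1(X) = Tr(X)$ by path-homotopy (equivalently, by the equivalence relation generated by the existence of a $2$-trace between two $1$-traces). The quotient map $T_1(X)\cup\{*\} \twoheadrightarrow \dipi{X}$ is a map of pointed sets.

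Next I would define the action of $t \in T_X$ on a homotopy class by $t\bullet [p] := [t \bullet p]$, using the $T_X$-action on $T_1(X)\cup\{*\}$ supplied by Lemma \ref{lem:Timod}, and set $t\bullet * = *$, $0\bullet [p] = *$. The central well-definedness check is that if $p \simeq p'$ in $Tr(X)(a,b)$, i.e.\ there is a $2$-trace $H \colon \Delta_1 \to Tr(X)$ with $d_1(H) = p$ and $d_0(H) = p'$, then $t\bullet p \simeq t\bullet p'$. For this I would take $t\bullet H$, the pointwise concatenation of $t$ with the continuous family $H$, which by the same formula as in Lemma \ref{lem:Timod} defines a continuous map $\Delta_1 \to Tr(X)$; continuity relies on continuity of path-concatenation in the compact-open topology and on the fact that concatenation descends to traces (so that reparametrization ambiguities disappear). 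One also needs to check the boundary conditions $d_1(t\bullet H) = t\bullet p$ and $d_0(t\bullet H) = t\bullet p'$, which follow directly from the definition of $\bullet$ on $2$-traces and the simplicial compatibility computed in the proof of Proposition \ref{prop:Kan}. The composability condition $t(1) = p(0)$ depends only on endpoints and hence on the homotopy class, so the case split in the definition of $\bullet$ is homotopy-invariant.

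Finally, I would verify the module axioms. Associativity $(t\times t')\bullet[p] = t\bullet(t'\bullet[p])$ follows from the already-established associativity of $\bullet$ on $T_1(X)\cup\{*\}$ (Lemma \ref{lem:Timod}), combined with the fact that concatenation on traces is strictly associative after quotienting by reparametrization. The unit law $1\bullet[p] = [p]$ holds since $1$ is the unit of $T_X$ and the concatenation with a constant path is reparametrization-equivalent to the original path. The absorption laws $0\bullet[p] = * = t\bullet *$ are built in by definition. The main obstacle is the well-definedness step above: one must be careful to exhibit $t\bullet H$ as a genuine $2$-trace (in particular with constant start and target) and to ensure continuity in the compact-open topology; everything else is a direct descent of the structure from $T_1(X)\cup\{*\}$ to its $\pi_0$-quotient.
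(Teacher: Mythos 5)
Your proposal is correct and takes essentially the same route as the paper: identify the underlying pointed set with $\bigcup\limits_{a,b\in X}\pi_0(Tr(X)(a,b))$ plus the base point, and let the $T_X$-action of Lemma \ref{lem:Timod} descend to homotopy classes, using the compatibility of the action with the simplicial structure (Proposition \ref{prop:Kan}) for well-definedness — the paper leaves this descent implicit in the paragraph preceding the lemma, while you spell it out via the $2$-trace $t\bullet H$. One tiny caveat: the unit $1\in T_X$ is a formal adjoined element, not a constant path, so the unit law $1\bullet[p]=[p]$ holds by definition of the action rather than by a reparametrization argument; this does not affect the validity of your proof.
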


\begin{proof}
As degeneracy and boundary operators respect endpoints of $i$-traces, Lemma \ref{lem:boundariesstartend}, this means that elements of $\dipi{X,t}$ have the same endpoints $a$, $b$ as $t$. Then, the homotopy relation between elements of $\dipi{X,t}$ is easily seen to correspond to components of $Tr(X)(a,b)$.
\end{proof}

And in higher-dimension, it is easy to see that: 
\begin{lemma}
\label{lemma:hommodn}
Let $X$ be a directed space, and $n \geq 2$. The $n$th dihomotopy module is the module $(\dipin{X},T_X)$ which has as elements $0$ and any $g \in \bigcup\limits_{a,b \in X} \pi_{n-1}(Tr(X)(a,b))$ with multiplication, for $g\in \pi_{n-1}(Tr(X)(a,b))$ and $g'\in \pi_{n-1}(Tr(X)(a',b'))$: 
$$g\times g'=
\left\{\begin{array}{ll}
gg' & \mbox{if $a=a'$ and $b=b'$} \\
0 & \mbox{otherwise} 
\end{array}
\right.$$
\noindent and actions of $t\in Tr(X)(a',b')$ on $[g]\in \pi_{n-1}(Tr(X)(a,b))$ ($g$ is an $n$-trace, representative of this class) is given by:
$$
t\bullet [g] = \left\{ \begin{array}{ll}
[t\bullet g] & \mbox{if $b'=a$} \\
0 & \mbox{otherwise}
\end{array}\right.
$$
\noindent when $t\bullet g$ in the formula above is given in Lemma \ref{lem:Timod}
\end{lemma}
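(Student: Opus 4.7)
The plan is to unfold the definition of $\dipin{X}$ given immediately before the lemma and to check, piece by piece, that it matches the explicit description in the statement and satisfies the module axioms over $T_X$. First I would identify the underlying set. By the defining diagram, $\dipin{X,t}$ for a trace $t:a\to b$ consists of homotopy classes (relative to the boundary) of continuous maps $p:\Delta_{n-1}\to Tr(X)$ sending $\partial\Delta_{n-1}$ constantly to $[t]$. Collapsing $\partial\Delta_{n-1}$ gives based maps $(S^{n-1},*)\to (Tr(X)(a,b),[t])$, so $\dipin{X,t}\setminus\{0\}$ is canonically $\pi_{n-1}(Tr(X)(a,b),[t])$; disjointly unioning over $t$ and identifying distinguished points yields the set described in the lemma.

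Next I would verify the internal product and the action. Inside a single $\pi_{n-1}(Tr(X)(a,b),[t])$ (equivalently, a single path-component of $Tr(X)(a,b)$ with a choice of basepoint), the product $gg'$ is the standard group operation coming from the co-H-space structure on $S^{n-1}$, available because $n\geq 2$. When $g,g'$ lie in distinct components or at different endpoints, the product is declared to be $0$, so associativity and absorption by $0$ are automatic. The $T_X$-action is the one already constructed in Lemma \ref{lem:Timod} on representative $n$-traces, and what must be checked is that it descends to homotopy classes: a boundary-preserving homotopy $H:\Delta_{n-1}\times I\to Tr(X)$ between two $n$-traces $p,p'$ can be left-concatenated with $t$ pointwise (using the same formula as in Lemma \ref{lem:Timod}) to produce a boundary-preserving homotopy between $t\bullet p$ and $t\bullet p'$, establishing well-definedness.

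Finally I would verify the module axioms. Associativity $(t\times t')\bullet[g]=t\bullet(t'\bullet[g])$ comes from associativity of trace concatenation, which holds in $Tr(X)$ thanks to reparametrization; the unit and absorption laws are immediate from the definitions. The main obstacle is distributivity, $t\bullet([g]\times[g'])=(t\bullet[g])\times(t\bullet[g'])$. I would reduce to the non-zero case: assuming $g,g'$ share endpoints and lie in the same path-component of $Tr(X)(a,b)$, left-concatenation by $t$ defines a continuous map $c_t:Tr(X)(a,b)\to Tr(X)(a',b)$ sending $[g_0]\mapsto [t\times g_0]$. By naturality of the homotopy group product under continuous maps, the induced $(c_t)_*:\pi_{n-1}(Tr(X)(a,b),[t'])\to\pi_{n-1}(Tr(X)(a',b),[t\times t'])$ is a group homomorphism, which is precisely the required distributivity. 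The zero cases follow because $c_t$ preserves the end-point $b$ and the path-component of $Tr(X)(a,b)$ it lands in (when defined), so if $g\times g'=0$ then $(t\bullet g)\times(t\bullet g')=0$ as well. The right action is handled symmetrically, yielding the bimodule structure claimed.
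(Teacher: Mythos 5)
Your proof is correct in substance, but it takes a genuinely different route from the paper's. The paper argues simplicially: it invokes Proposition \ref{prop:Kan}, builds from two elements $p,q\in\dipin{X,t}$ a horn whose remaining faces are iterated degeneracies of $t$, fills it using the Kan property to obtain an $n$-simplex $r$, and \emph{defines} the product as $p\times q=d_n(r)$; distributivity of the $T_X$-action over this product is then deduced from the claim in Proposition \ref{prop:Kan} that the Kan filler can be chosen to be a morphism of modules. You instead work classically: you identify $\dipin{X,t}$ with $\pi_{n-1}(Tr(X)(a,b),[t])$ by collapsing $\partial\Delta_{n-1}$, take the group law from the co-H structure on $S^{n-1}$ (using $n\geq 2$), and obtain distributivity from functoriality of $\pi_{n-1}$ applied to the continuous left-concatenation map $c_t\colon Tr(X)(a,b)\to Tr(X)(a',b)$. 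Your route is more elementary and is arguably more solid at the crucial step: the homomorphism property of $(c_t)_*$ is completely standard, whereas the paper leans on the module property of Kan fillers, which Proposition \ref{prop:Kan} asserts with almost no justification. What the paper's route buys is coherence with the simplicial-module framework (the Kan object $ST_*(X)$) it has set up and intends to exploit later; your route quietly re-proves the topological facts that framework encodes.

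One caveat applies to your proof, to the paper's proof, and to the statement itself: the product of $g$ and $g'$ with equal endpoints is only honestly defined when they share a basepoint (the paper takes both in a single $\dipin{X,t}$; you take both in a single based group). In the zero cases of distributivity, your justification --- that $c_t$ ``preserves the path-component'' --- is not by itself sufficient, because $c_t$ may \emph{merge} distinct components of $Tr(X)(a,b)$ (a typical directed phenomenon: non-dihomotopic traces can become dihomotopic after extension). What actually closes this case is the convention that the product vanishes unless the basepoints coincide, together with the observation that $t\bullet(-)$ sends spheres based at distinct traces $x_0\neq x_1$ to spheres based at $t\times x_0$ and $t\times x_1$; if one wants these to remain distinct one needs injectivity of left concatenation on traces, a point neither you nor the paper addresses. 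Since the paper's own proof is silent on exactly the same issue, this is a shared imprecision rather than a defect of your argument relative to the paper.
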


\begin{proof}
Consider $p$ and $q$ two elements of $\dipin{X,t}$, $n \geq 2$ and elements of $T_n(X)$:
$$
v_i = \left\{\begin{array}{ll}
s_0\circ \ldots \circ s_0(t) & \mbox{if $0 \leq i \leq n-2$}\\
p & \mbox{if $i=n-1$} \\
q & \mbox{if $i=n+1$}
\end{array}\right.
$$
Now, as all boundary and degeneracy operators of $i$-traces respect the end points, Lemma \ref{lem:boundariesstartend}, this means that $p\times q$, this 
is easily seen to constitute a horn in $Tr(X)$ if $p$ and $q$ have the same endpoints $a$, $b$. In that case, by the Kan property, it has a filler which is an $n$-simplex $r: \ \Delta_n \rightarrow Tr(X)$ in the space of traces of $X$. 
We define an internal multiplication in $\dipin{X,t}$ by:
$$
p\times q = d_n(r)
$$
This multiplication is easily seen to be equal to the multiplication internal to group $\pi_n(Tr(X)(a,b)$. 

In case $p$ and $q$ do not have the same endpoints, we set $p\times q=0$. 

This multiplication is compatible with the action of $T_X$ over $T_i(X)$ (defined in Lemma \ref{lem:Timod}):
$$
t\bullet (p\times q) = (t\bullet p)\times (t\bullet q)
$$
\noindent as $ST(X)$ is a Kan simplicial object in the category of modules (Lemma \ref{prop:Kan}). 
\end{proof}
More precisely, $\dipin{X}$ is a bimodule in 
$Grp_*$ over the absorption monoid of traces for $n=2$, and a bimodule in $Ab_*$ over the absorption monoid of traces for $n\geq 3$.

Let $dTop_I$ be the wide subcategory of the category $dTop$ of directed spaces where we retain only injective morphisms. 
Finally, we have: 

\begin{lemma}
The homotopy modules constructions of Lemma  \ref{lem:hommodn} (resp. of Lemma  \ref{lem:hommod1}) define a functor from $dTop_I$ to $Mod$ (resp. from $dTop$ to $ModSet$). 
\end{lemma}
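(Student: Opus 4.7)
The plan is, for every d-map $f: X \to Y$, to construct a pair $(T_f, \dipin{f})$, verify that it is a morphism in the target category, and then check the two functoriality axioms. Define $T_f([p]) = [f \circ p]$ on traces, $T_f(0) = 0$, $T_f(1) = 1$; and $\dipin{f}([p]) = [f \circ p]$ on $n$-trace homotopy classes, with $* \mapsto *$. Both are well defined because $f$ is continuous and preserves d-paths, so descends to the quotients by reparametrization and by classical homotopy.

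The first and most delicate step is to check that $T_f$ is an absorption monoid morphism. Preservation of $1$ and of products of composable traces is immediate from $(f \circ p) * (f \circ q) = f \circ (p * q)$. The subtle point is the absorbing element: if $p(1) \neq q(0)$, so $[p] \times [q] = 0$, one needs $T_f([p]) \times T_f([q]) = 0$ as well, i.e.\ $f(p(1)) \neq f(q(0))$. This is precisely why the source of the higher-dimensional functor is restricted to $dTop_I$: an injective d-map cannot merge the endpoints of non-composable traces. I expect this to be the main obstacle, and the sole reason for passing to the wide subcategory of injective morphisms; for the $n=1$ case, the target $ModSet$ uses the weaker pointed-set module structure but the same compatibility analysis applies at the level of $T_f$.

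Next I would verify the morphism compatibility $\dipin{f}(t \bullet p) = T_f(t) \bullet \dipin{f}(p)$: by the explicit concatenation formula of Lemma \ref{lem:Timod}, both sides, evaluated at $u \in \Delta_{i-1}$ and $x \in \I$, concatenate $f \circ t$ with $f \circ p(-, u)$ in exactly the same way when the endpoints match, and both equal $*$ otherwise, once more by injectivity on endpoints. For $n \geq 2$, preservation of the internal multiplication on $\dipin{X}$ defined via Kan fillers (Proposition \ref{prop:Kan} and Lemma \ref{lemma:hommodn}) follows from naturality: the continuous map $Tr(f): Tr(X) \to Tr(Y)$ carries horns in $Tr(X)(a,b)$ to horns in $Tr(Y)(f(a), f(b))$, and any filler of the image horn is homotopic to the image of a filler in $Tr(X)(a,b)$, so the products agree in $\pi_{n-1}(Tr(Y)(f(a), f(b)))$. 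Functoriality on identities and composites is then immediate from $(g \circ f) \circ p = g \circ (f \circ p)$ and $\mathrm{id} \circ p = p$, which give $T_{g \circ f} = T_g \circ T_f$ and $\dipin{g \circ f} = \dipin{g} \circ \dipin{f}$, together with the identity-preservation counterparts.
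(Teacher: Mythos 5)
Your proposal is correct and takes essentially the same route as the paper's own proof: induced maps are defined by post-composition, and injectivity of $f$ is invoked exactly where the paper invokes it, namely to guarantee that non-composable traces have non-composable images, so that ${\cal T}(f)$ preserves the absorbing element. You are in fact more thorough than the paper, whose proof stops after checking that the trace-monoid assignment ${\cal T}$ is functorial; the compatibility $\dipin{f}(t\bullet p)=T_f(t)\bullet\dipin{f}(p)$, the preservation of the Kan-filler multiplication for $n\geq 2$, and the identity/composition axioms are all left implicit there, whereas you address them.

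One point deserves attention. Your remark that for $n=1$ ``the same compatibility analysis applies at the level of $T_f$'' means your argument, like the paper's, uses injectivity in that case too, and hence only produces a functor on $dTop_I$; the lemma, however, claims that the first homotopy module gives a functor on all of $dTop$. This mismatch is present in the paper itself: its proof declares ${\cal T}$ on $dTop$ and then appeals to injectivity of $f$. Indeed, if a non-injective d-map $f$ merges the endpoints of two non-composable traces $p$ and $q$, then ${\cal T}(f)([p]\times[q])={\cal T}(f)(0)=0$ while ${\cal T}(f)([p])\times{\cal T}(f)([q])=[(f\circ p)*(f\circ q)]\neq 0$, so ${\cal T}(f)$ is not a morphism of absorption monoids; and since a morphism in $ModSet$ still has an absorption-monoid morphism as its second component, the restriction to injective maps appears necessary for $n=1$ as well. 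So your analysis is the defensible one; just be aware that it establishes a corrected variant of the stated lemma (both functors on $dTop_I$) rather than the statement as literally written.
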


\begin{proof}
First, we show that the trace monoid is part of a functorial definition. Let ${\cal T}: \ dTop \rightarrow Mon_*$ which, to any $X \in dTop$ associates its absorption monoid of traces $T_X$ (Definition \ref{def:monoidofpaths}). Now, let $f: \ X \rightarrow Y$ be a morphism of directed spaces. Then we define ${\cal T}(f)$ to be such that:
$
{\cal T}(f)([p])=[f\circ p]
$ and ${\cal T}(f)(0)=0$. 

Indeed, if $p$ and $q$ are composable dipaths in $Tr(X)$, 
${\cal T}(f)([p]\times [q])={\cal T}(f)([p*q])=[f\circ (p*q)]=[(f\circ p)*(f \circ q)]=[f\circ p]\times [f\circ q]={\cal T}(f)(p)\times {\cal T}(f)(q)$. Also, because $f$ being injective, $p$ and $q$ are not composable is equivalent to $f\circ p$ and $f\circ q$ are not composable, the action of ${\cal T}(f)$ on $0$ is correct as well. 
\end{proof}

\section{Changes of coefficients functors}

\label{sec:changes}

Last but not least, the dihomotopy modules indeed behave like modules, we can change the base absorption module, i.e. the coefficients of the modules, which will be necessary to deal with exact sequences of dihomotopy modules (to be expanded in future work). 


\paragraph{Restriction of coefficients}

\begin{definition}
Let $l: \ T \rightarrow T'$ be a morphism of absorption monoids. We define the \look{restriction of coefficients} functor:
$$
l^*: \ LMod_{T'} \rightarrow LMod_T
$$
\noindent which, to any module $(M',T') \in LMod_{T'}$ (for which the left action of $T'$ on $M'$ is denoted by ${ }_{T'}\bullet$) associates the modules $l^*(M',T')$:
\begin{itemize}
\item which has as underlying absorption monoid $M'$
\item and has as action ${ }_T\bullet$ of $t\in T$ on $m\in M$:
$$
t{ }_T\bullet m = l(t){ }_{T'} \bullet m
$$
And for any module homomorphism $f: \ (M_1,T) \rightarrow (M_2,T) \in LMod_T$, 
$$
l^*(f)(m)=f(m)
$$
\end{itemize}
\end{definition}

This extends to right modules and bimodules. 

\paragraph{Extension of scalars}

\begin{definition}
Let $l: \ T \rightarrow T'$ be a morphism of absorption monoids. We define the \look{extension of coefficients} functor:
$$
l_!: \ LMod_T \rightarrow LMod_{T'}
$$
\noindent which, to any module $(M,T) \in LMod_T$ associates a module $(M',T')$ with:
\begin{itemize}
\item elements of $M'$ are finite non-empty products (in the corresponding absorption monoid) of classes $\langle t',m\rangle$ of pairs $(t',m)$ modulo the following equations, plus elements 0 and 1: 
\begin{eqnarray}
\langle t',t\bullet m\rangle = \langle t'\times l(t),m\rangle \label{eq:9}\\
\langle t',0\rangle=0 \\
\langle t',1\rangle=1 \mbox{, for $t'\neq 0$} \\
\langle 0,m\rangle=0 \\
\langle t',m_1\rangle\times \langle t',m_2\rangle =  \langle t',m_1 m_2\rangle
\end{eqnarray}
\item and action of $s' \in T'$ given by: 
$$
s' \bullet \langle t',m\rangle=\langle s'\times t',m\rangle
$$
\noindent (the action on products of classes $\langle t'_i,m_i\rangle$ being a consequence of the modules axioms)
\end{itemize}
The effect of $l_!$ on a morphism $f: \ (M_1,T) \rightarrow (M_2,T) \in LMod_T$ is:
$$
l_!(f)(\langle t',m\rangle)=\langle t',f(m)\rangle
$$
\noindent (the effect of $l_!(f)$ on products of classes $\langle t'_i,m_i\rangle$ being given by the fact that $l_!(f)$ is a morphism of modules, hence in particular commutes with its monoid internal multiplication)
\end{definition}

This is well defined indeed: 
\begin{itemize}
\item The action of $T'$ on $M'$ is well-behaved with respect to the equivalence class (Equation (\ref{eq:9}) in particular): 
$$
\begin{array}{rcl}
s'\bullet \langle t',t\bullet m\rangle & = & \langle s'\times t',t\bullet m\rangle \\
& = & \langle s'\times t'\times l(t),m\rangle \\
& = & s'\bullet \langle t'\times l(t),m\rangle
\end{array}
$$
\item The definition $l_!(f)$ is compatile with the equivalence class (Equation (\ref{eq:9}) in particular): 
$$
\begin{array}{rcl}
l_!(f,Id)(\langle t',t\bullet m\rangle) & = & \langle t',f(t \bullet m)\rangle \\ 
& = & \langle t',t\bullet f(m)\rangle \\
& = & \langle t' \times l(t), f(m)\rangle \\
& = & l_!(f,Id)(\langle t'\times l(t),m\rangle)
\end{array}
$$
\end{itemize}

This extends to right modules and bimodules. 

Now we have:

\begin{lemma}
For every $l: \ T \rightarrow T'$ morphism of absorption monoids, $l_!$ is left adjoint to $l^* $.    
\end{lemma}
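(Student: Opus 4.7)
The plan is to establish the adjunction by constructing a unit $\eta_M:(M,T)\to l^*l_!(M,T)$ and, for any $\phi:(M,T)\to l^*(N,T')$ in $LMod_T$, a unique adjunct $\tilde\phi:l_!(M,T)\to(N,T')$ in $LMod_{T'}$, thereby exhibiting the natural bijection
$$\mathrm{Hom}_{LMod_{T'}}(l_!(M,T),(N,T'))\ \cong\ \mathrm{Hom}_{LMod_T}((M,T),l^*(N,T')).$$
First I would set $\eta_M(m)=\langle 1,m\rangle$. That $\eta_M$ is an absorption monoid morphism is immediate from the defining relations $\langle 1,0\rangle=0$, $\langle 1,1\rangle=1$, and $\langle 1,m_1\rangle\times\langle 1,m_2\rangle=\langle 1,m_1 m_2\rangle$, while relation (\ref{eq:9}) yields $\eta_M(t\bullet m)=\langle 1,t\bullet m\rangle=\langle l(t),m\rangle=l(t)\bullet\eta_M(m)$, which is exactly $T$-equivariance once the $T$-action on the codomain is read through $l$.

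Next, for the adjunct I would define $\tilde\phi$ on generators by $\tilde\phi(\langle t',m\rangle)=t'\bullet\phi(m)$ and extend multiplicatively to finite products, with $\tilde\phi(0)=0$ and $\tilde\phi(1)=1$. The central technical step is verifying that this assignment descends through the five defining relations of $l_!(M,T)$. Relation (\ref{eq:9}) is the identity $t'\bullet\phi(t\bullet m)=t'\bullet(l(t)\bullet\phi(m))=(t'\times l(t))\bullet\phi(m)$, which uses precisely the $T$-equivariance of $\phi$; the remaining four relations reduce to the module axioms $t'\bullet 0=0$, $0\bullet n=0$, $t'\bullet(n_1 n_2)=(t'\bullet n_1)(t'\bullet n_2)$, together with the monoid-morphism condition $\tilde\phi(1)=1$. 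Preservation of the $T'$-action on generators is then the one-line computation $\tilde\phi(s'\bullet\langle t',m\rangle)=\tilde\phi(\langle s'\times t',m\rangle)=(s'\times t')\bullet\phi(m)=s'\bullet\tilde\phi(\langle t',m\rangle)$.

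Finally, to close the bijection, given $f:l_!(M,T)\to (N,T')$ in $LMod_{T'}$ the composite $l^*(f)\circ\eta_M$ sends $m\mapsto f(\langle 1,m\rangle)$; conversely, $T'$-equivariance of $f$ forces $f(\langle t',m\rangle)=f(t'\bullet\langle 1,m\rangle)=t'\bullet f(\langle 1,m\rangle)$, so $f$ is entirely determined by $l^*(f)\circ\eta_M$ and agrees with the adjunct of that composite on generators, hence on all of $l_!(M,T)$ since both sides are monoid morphisms. The other triangle $(l^*(\tilde\phi)\circ\eta_M)(m)=\tilde\phi(\langle 1,m\rangle)=1\bullet\phi(m)=\phi(m)$ uses only $1\bullet-=\mathrm{id}$. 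Naturality in $M$ and $N$ is a direct check from the explicit formulas. The main obstacle I foresee is the careful bookkeeping of well-definedness of $\tilde\phi$ on the quotient by all five relations; in particular the relation $\langle t',1\rangle=1$ for $t'\ne 0$ demands the compatibility $t'\bullet 1_N=1_N$ between the $T'$-action and the unit of the target, a coherence implicit in being a morphism of $LMod_{T'}$. Once these checks are in place, the triangle identities reduce to a routine diagram chase.
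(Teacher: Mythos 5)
Your proposal is correct and takes essentially the same route as the paper: your adjunct $\tilde\phi(\langle t',m\rangle)=t'\bullet\phi(m)$ and the composite $m\mapsto f(\langle 1,m\rangle)$ are exactly the paper's two transforms $\underline{h}$ and $\overline{f}$, and packaging them as a unit $\eta_M$ plus triangle identities rather than as an explicit hom-set bijection is only a cosmetic difference. You are in fact more careful than the paper (which never verifies that $\underline{h}$ descends through the defining relations of $l_!(M,T)$), but note that the coherence you invoke for the relation $\langle t',1\rangle=1$, namely $t'\bullet 1_N=1_N$, is a property of the module $(N,T')$ rather than of the morphism $f$, and it does not literally follow from the module axioms listed in the paper --- a subtlety the paper's own proof silently shares.
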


\begin{proof}
We prove that $$Hom_{LMod_{T'}}(l_!(M,T),(M',T'))\equiv Hom_{LMod_T}((M,T),l^*(M',T'))$$

Let $f: \ l_!(M,T) \rightarrow (M',T')$ be a morphism in $LMod_{T'}$. We define: 
$$
\overline{f}: \ (M,T) \rightarrow l^*(M',T')
$$
\noindent by:
\begin{eqnarray}
\overline{f}(m) & = & f(\langle 1,m\rangle)
\end{eqnarray}

First, we prove this defines a morphism in $LMod_T$:

\begin{eqnarray}
\overline{f}(t\bullet m) & = & f(\langle 1,t\bullet m\rangle) \\
& = & f(\langle l(t),m\rangle) \\
& = & f(l(t) \bullet \langle 1,m\rangle) \\
& = & l(t) { }_{T'} \bullet f(\langle 1,m\rangle) \\
& = & l(t) { }_{T'} \bullet \overline{f}(m) \\
& = & t { }_T \bullet \overline{f}(m) 
\end{eqnarray}

Now let $h: \ (M,T) \rightarrow l^*(M',T')$ be a morphism in $LMod_T$. We define: 
$$ 
\underline{h}: \ l_!(M,T) \rightarrow (M',T')
$$
\noindent by
$$
\underline{h}(\langle t',m\rangle) = t'\bullet h(m)
$$
\noindent (and the effect of $\underline{h}$ on products of classes $\langle t'_i,m_i\rangle$ being the product of the action on individual classes, as it should, for being a morphism of modules). 

This defines indeed a morphism:
\begin{eqnarray*}
\underline{h}(s'\bullet \langle t',m\rangle) & = & \underline{h}(\langle s'\times t',m\rangle) \\
& = & (s' \times t')\bullet h(m) \\ 
& = & s' \bullet (t' \bullet h(m)) \\
& = & s' \bullet \underline{h}(\langle t',m\rangle)
\end{eqnarray*}

Now we prove that these two transforms, $f \rightarrow \overline{f}$ and $h \rightarrow \underline{h}$ are inverse of one another. 

Consider first:
\begin{eqnarray*}
\overline{(\underline{h})}(m) & = & \underline{h}(\langle 1,m\rangle) \\
& = & 1 \bullet h(m) \\
& = & h(m)
\end{eqnarray*}
\noindent and then: 
\begin{eqnarray*}
\underline{(\overline{h})}(\langle t',m\rangle) & = & t' \bullet \overline{h}(m) \\
& = & t' \bullet h(\langle 1,m\rangle) \\
& = & h(t' \bullet \langle 1,m\rangle) \\
& = & h(\langle t',m\rangle)
\end{eqnarray*}

The fact that $\overline{(.)}$ and $\underline{(.)}$ are natural (isomorphisms) is obvious. 
\end{proof}

\begin{lemma}
Let $l: \ T \rightarrow T'$ be a morphism of absorption monoids, and $(M,T)$ be a module in $Mon_*$ over $Mon_*$. Then if $M$ is an absorption group, then $l_!(M,T)$ is an absorption group. 
\end{lemma}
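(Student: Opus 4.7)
The plan is to show directly that every nonzero element of the absorption monoid $l_!(M,T)$ has a two-sided multiplicative inverse, using the group structure on $M\setminus\{0\}$.

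First I would identify the shape of a nonzero element. By construction a general element $x$ of $l_!(M,T)$ is a finite non-empty product of classes $\langle t'_i,m_i\rangle$; using the defining relations $\langle 0,m\rangle=0$ and $\langle t',0\rangle=0$ together with the fact that $0$ is absorbing in the formal-product monoid, any representative of a nonzero $x$ must have all $t'_i\neq 0$ in $T'$ and all $m_i\neq 0$ in $M$. Using $\langle t',1\rangle=1$ (for $t'\neq 0$) I may also drop any factors whose second coordinate is the identity of $M$.

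Next, since $M\setminus\{0\}$ is a group, each $m_i$ has a two-sided inverse $m_i^{-1}\in M\setminus\{0\}$. For $x=\langle t'_1,m_1\rangle\times\cdots\times\langle t'_n,m_n\rangle$ I propose the candidate inverse
$$x^{-1}=\langle t'_n,m_n^{-1}\rangle\times\cdots\times\langle t'_1,m_1^{-1}\rangle.$$
Verification of $x\times x^{-1}=1$ is a telescoping computation: the innermost pair $\langle t'_n,m_n\rangle\times\langle t'_n,m_n^{-1}\rangle$ collapses via the relation $\langle t',a\rangle\times\langle t',b\rangle=\langle t',ab\rangle$ to $\langle t'_n,m_n m_n^{-1}\rangle=\langle t'_n,1\rangle=1$ (using $t'_n\neq 0$); dropping the neutral factor and inducting on $n$ finishes. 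The verification of $x^{-1}\times x=1$ is symmetric.

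Finally, I would note that the class of $x^{-1}$ depends only on the class of $x$: by uniqueness of two-sided inverses in any monoid, if two words represent the same class $x$ their associated inverses must agree in $l_!(M,T)$. The only mildly delicate step is the first one, namely the observation that every nonzero class admits a representative all of whose entries are themselves nonzero; beyond that the argument is routine and does not need an explicit analysis of the relation $\langle t',t\bullet m\rangle=\langle t'\times l(t),m\rangle$, since uniqueness of inverses handles well-definedness automatically.
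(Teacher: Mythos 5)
Your proof is correct and follows essentially the same route as the paper's: the paper likewise writes a nonzero element as a finite product $\langle t_1,m_1\rangle\cdots\langle t_n,m_n\rangle$ and exhibits the reversed product $\langle t_n,m_n^{-1}\rangle\cdots\langle t_1,m_1^{-1}\rangle$ as its inverse. You merely make explicit what the paper leaves implicit — the telescoping collapse via $\langle t',a\rangle\times\langle t',b\rangle=\langle t',ab\rangle$ and $\langle t',1\rangle=1$, the observation that nonzero classes force nonzero entries, and well-definedness via uniqueness of two-sided inverses — which is a welcome tightening, not a different argument.
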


\begin{proof}
We note that elements of $l_!(M,T)$ are of the form $x=\langle t_1,m_1\rangle \langle t_2,m_2\rangle\ldots <t_n,m_n\rangle$. If $M$ is an absorption group, then we note that $\langle t_n,m_n^{-1}\rangle\langle t_{n-1},m_{n-1}^{-1}\rangle$ $\ldots \langle t_1,m_1^{-1}\rangle$ is the inverse of $x$. 
\end{proof}

\begin{remark}
There is no reason a priori, though, that if $M$ is Abelian, $l_!(M,T)$ is Abelian as well.     
\end{remark}

\paragraph{Co-extension of scalars}

\begin{definition}
Let $l: \ T \rightarrow T'$ be a morphism of absorption monoids. We define the \look{co-extension of coefficients} functor: 
$$
l_*: \ LMod_{T} \rightarrow LMod_{T'}
$$
\noindent which, to any module $(M,T) \in LMod_T$ associates a module $(M',T')$ with: 
\begin{itemize}
\item Elements of $M'$ are morphisms $g$ in $LMod_T$ from $(T',T)$ to $(M,T)$, with $(T',T) \in LMod_T$ with the action of $t\in T$ on $t'\in T'$ being defined by $t\bullet t'=l(t)\times_{T'} t'$. 
\item The multiplication of $g_1$ and $g_2$ in $M'$ is the componentwise multiplication: for all $t' \in T'$, 
$$
(g_1\times g_2)(t')=g_1(t')\times g_2(t')
$$
\item The action of $t'' \in T'$ on $g \in M'$ is, for all $t' \in T'$:
$$
(t''\bullet g)(t') = g(t' \times t'')
$$
\end{itemize}
The effect of $l_*$ on a morphism $f: \ M_1 \rightarrow M_2 \in LMod_T$ is: for all $t' \in T'$,  
$$
l_*(f)(g)(t') = f(g(t'))
$$
\end{definition}

\begin{lemma}
For every $l: \ T \rightarrow T'$ morphism of absorption monoids, $l_*$ is right adjoint to $l^* $.    
\end{lemma}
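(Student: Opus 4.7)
The plan is to construct, for each $(M',T')\in LMod_{T'}$ and $(M,T)\in LMod_T$, a natural bijection
$$
Hom_{LMod_T}(l^*(M',T'),(M,T)) \;\cong\; Hom_{LMod_{T'}}((M',T'),l_*(M,T))
$$
following the same template as the preceding $l_!\dashv l^*$ proof, but with the roles of unit and counit swapped. Writing $\bullet'$ for the $T'$-action on $M'$ (so that the $T$-action on $l^*(M',T')$ is $t\cdot m'=l(t)\bullet' m'$), I would define the two transports by
$$
\overline{f}(m')(t') = f(t'\bullet' m'), \qquad \underline{h}(m') = h(m')(1_{T'}),
$$
for $f\colon l^*(M',T')\to(M,T)$ in $LMod_T$ and $h\colon (M',T')\to l_*(M,T)$ in $LMod_{T'}$. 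These are the obvious analogues of the classical coinduction/restriction adjunction for modules over rings, specialised to the present pointed-monoid setting.

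Next I would verify four routine facts in turn. First, $\overline{f}(m')$ really belongs to $l_*(M,T)$, i.e.\ defines a morphism $(T',T)\to(M,T)$ in $LMod_T$: the $T$-equivariance $\overline{f}(m')(l(t)\times t')=t\bullet \overline{f}(m')(t')$ follows in one line from $(l(t)\times t')\bullet' m'=l(t)\bullet'(t'\bullet' m')$ together with the $T$-equivariance of $f$. Second, $\overline{f}$ itself is $T'$-equivariant: using the defining $T'$-action $(s'\bullet g)(t')=g(t'\times s')$ on $l_*(M,T)$, this reduces to the module axiom $(t'\times s')\bullet' m'=t'\bullet'(s'\bullet' m')$. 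Third, $\underline{h}$ is $T$-equivariant: from the $T'$-equivariance of $h$ evaluated at $1\in T'$, $\underline{h}(l(t)\bullet' m')=(l(t)\bullet h(m'))(1)=h(m')(l(t))$, and then because $h(m')$ is itself a morphism $(T',T)\to(M,T)$ in $LMod_T$, we get $h(m')(l(t))=t\bullet h(m')(1)=t\bullet\underline{h}(m')$. Fourth, the two assignments are mutually inverse: $\underline{(\overline{f})}(m')=f(1\bullet' m')=f(m')$ and $\overline{(\underline{h})}(m')(t')=\underline{h}(t'\bullet' m')=(t'\bullet h(m'))(1)=h(m')(t')$. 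Naturality in both variables is then immediate from the formulas.

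The step I expect to require the most care is making sure that $\overline{f}(m')$, and the pointwise products used to equip $l_*(M,T)$ with an absorption-monoid structure, respect the full absorption-monoid structure of $l_*(M,T)$, not merely the pointed-set-plus-$T$-action structure. The module axioms distribute the action over multiplication in the module slot but not in the scalar slot, so compatibility with the componentwise multiplication on $l_*(M,T)$ has to be tracked by hand; once one checks that $\overline{f}(m')(0)=f(0\bullet' m')=0$ and that pointwise products of $T$-equivariant pointed maps remain $T$-equivariant, the rest is bookkeeping and the adjunction becomes purely formal.
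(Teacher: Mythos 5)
Your proof is correct and follows essentially the same route as the paper's: the same transports $\overline{f}(m')(t')=f(t'\bullet' m')$ and $\underline{h}(m')=h(m')(1_{T'})$, the same $T'$- and $T$-equivariance checks, and the same computations showing the two assignments are mutually inverse. If anything, you are more careful than the paper: you explicitly verify that $\overline{f}(m')$ lands in $l_*(M,T)$ and that $\underline{h}$ is a morphism in $LMod_T$ (which the paper dismisses as ``easily seen''), and your closing caveat about compatibility with the absorption-monoid multiplication on $l_*(M,T)$ points at a genuine subtlety that the paper's own proof silently ignores rather than a defect of your argument.
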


\begin{proof}
We prove that $$Hom_{LMod_{T}}(l^*(M',T'),(M,T))\equiv Hom_{LMod_{T'}}((M',T'),l_*(M,T))$$

Let $f: \ l^*(M',T') \rightarrow (M,T)$ be a morphism in $LMod_{T}$. We define: 
$$
\overline{f}: \ (M',T') \rightarrow l_*(M,T)
$$
\noindent by: for all $m'\in M'$, 
\begin{eqnarray}
\overline{f}(m') & = & t' \in T' \rightarrow f(t' \bullet m')
\end{eqnarray}

First, we prove this defines a morphism in $LMod_{T'}$: for all $t' \in T'$:

\begin{eqnarray}
\overline{f}(t''\bullet m')(t') & = &   f(t'\bullet (t''\bullet m')) \\
& = & f((t'\times t'') \bullet m') \\
& = & (t'\times t'') \bullet f(m')
\end{eqnarray}

Also, we have: 
\begin{eqnarray}
(t'' \bullet \overline{f}(m'))(t') & = &  \overline{f}(m')(t'\times t'') \\
& = & f(t'\times t'' \bullet m') \\
& = & (t'\times t'') \bullet f(m') \\
& = & \overline{f}(t''\bullet m')(t')
\end{eqnarray}

Conversely, let $k: \ (M',T') \rightarrow l_*(M,T)$ be a morphism in $LMod_{T'}$, we define: 
$$
\underline{k}: \ l^*(M',T') \rightarrow (M,T)
$$
\noindent in $LMod_T$ by, for $m'\in l^*(M',T')$, 
$$
\underline{k}(m') =  k(m')(1_{T'})
$$

This is easily seen as well to define a morphim in $LMod_T$.  


We check now that $\underline{.}$ and $\overline{.}$ define inverse transformations: 

\begin{eqnarray*}
\underline{(\overline{f})}(m') & = & \overline{f}(m')(1_{T'}) \\
& = & f(1_{T'} \bullet m') \\
& = & f(m')
\end{eqnarray*}
\noindent and: 
\begin{eqnarray*}
\overline{(\underline{k})}(m') & = & t' \in T' \rightarrow  \underline{k}(t'\bullet m') \\
& = & t' \rightarrow  k(t' \bullet m')(1_{T'}) \\
& = & t' \rightarrow t'\bullet k(m')(1_{T'}) \\
& = & t' \rightarrow k(m')(1_{T'}\bullet t') \\
& = & t' \rightarrow k(m')(t') \\
& = & k(m')
\end{eqnarray*}

The fact that these transformations are natural is obvious. 
\end{proof}

\begin{remark}
If $(M,T)$ is a module in $Grp_*$ over $Mon_*$, $l_*(L,T)$ is a module in $Grp_*$ as well.  
\end{remark}

\section{Conclusion}

\label{sec:conc}

This note paves the way towards a (directed) homotopical counterpart of \cite{persmod}. In a future article, we will develop the directed homotopical sequences as well as formal relations with \cite{persmod}.


\begin{thebibliography}{1}

\bibitem{thebook}
Lisbeth Fajstrup, Eric Goubault, Emmanuel Haucourt, Samuel Mimram, and Martin
  Raussen.
\newblock {\em Directed Algebraic Topology and Concurrency}.
\newblock Springer, Switzerland, 2016.

\bibitem{persmod}
Eric Goubault.
\newblock Directed homology and persistence modules.
\newblock {\em J. Appl. Comput. Topol.}, 9(1):3, 2025.

\bibitem{grandisbook}
Marco Grandis.
\newblock {\em Directed Algebraic Topology: Models of Non-Reversible Worlds}.
\newblock Cambridge University Press, Cambridge, 2009.

\bibitem{howie1995fundamentals}
J.M. Howie.
\newblock {\em Fundamentals of Semigroup Theory}.
\newblock LMS monographs. Clarendon, 1995.

\bibitem{raussen2007reparametrizations}
Martin Raussen and Ulrich Fahrenberg.
\newblock Reparametrizations of continuous paths, 2007.

\bibitem{framed}
Michael Shulman.
\newblock Framed bicategories and monoidal fibrations.
\newblock {\em Theory and Applications of Categories}, 20:650--738, 2008.

\end{thebibliography}

\end{document}